\newtheorem{theorem}{Theorem}
\newtheorem{lemma}[theorem]{Lemma}
\newtheorem{prop}[theorem]{Proposition}
\theoremstyle{definition}
\newtheorem{defin}[theorem]{Definition}
\theoremstyle{remark}
\newtheorem*{rem}{Remark}
\newcommand{\fin}[1]{\mathrm{Fin}(#1)}
\renewcommand{\phi}{\varphi}
\begin{document}
	\title{A note on minimal models for pmp actions}
	\author{Andy Zucker}
	\date{July 2019}
	\maketitle
	
\begin{abstract}
	Given a countable group $G$, we say that a metrizable flow $Y$ is \emph{model-universal} if by considering the various invariant measures on $Y$, we can recover every free measure-preserving $G$-system up to isomorphism. Weiss in \cite{W} constructs a minimal model-universal flow. In this note, we provide a new, streamlined construction, allowing us to show that a minimal model-universal flow is far from unique.
	\let\thefootnote\relax\footnote{2010 Mathematics Subject Classification. Primary: 37B05; Secondary: 28D15.}
	\let\thefootnote\relax\footnote{The author was supported by NSF Grant no.\ DMS 1803489.}
\end{abstract}
	
In this paper, we consider actions of an infinite countable group $G$ on a standard Borel probability space $(X, \mu)$ by Borel, measure-preserving bijections. When an action $a\colon G\times X\to X$ is understood, we will suppress the action notation, and given $g\in G$ and $x\in X$ just write $gx$ or $g\cdot x$ for $a(g, x)$. We will refer to $(X, \mu)$ as a \emph{$G$-system}. A $G$-system is \emph{free} if for $\mu$-almost every $x\in X$, we have $G_x = \{1_G\}$, where $G_x := \{g\in G: gx = x\}$ is the \emph{stabilizer} of $x\in X$. By passing to a subset of measure $1$, we will often implicitly assume that every point in a free $G$-system has trivial stabilizer. If $(X, \mu)$ and $(Y, \nu)$ are two $G$-systems, we say that $(Y, \nu)$ is a \emph{factor} of $(X, \mu)$ if there is a Borel $X'\subseteq X$ with $\mu(X') = 1$ and a Borel $G$-equivariant map $f\colon X'\to Y$ with $\nu = f^*\mu$. If we can find $f$ as above that is also injective, then we call $(X, \mu)$ and $(Y, \nu)$ \emph{isomorphic $G$-systems}. 

A \emph{$G$-flow} is an action of $G$ by homeomorphisms on a compact Hausdorff space. We similarly suppress the action notation. Given a $G$-system $(X, \mu)$, a \emph{model} for $(X, \mu)$ is a compact metric $G$-flow $Y$ and an invariant Borel probability measure $\nu$ so that $(X, \mu)$ and $(Y, \nu)$ are isomorphic $G$-systems. We will be most interested in \emph{minimal} $G$-flows, those $G$-flows in which every orbit is dense. Notice that any minimal model of a free $G$-system must be \emph{essentially free}, where a $G$-flow $Y$ is essentially free if for each $g\in G\setminus \{1_G\}$, the set $\{y\in Y: gy = y\}$ is nowhere dense.

We say that a metrizable $G$-flow $Y$ is \emph{model-universal} if by considering the various invariant measures $\nu$ on $Y$, the $G$-systems $(Y, \nu)$ recover every (standard) free $G$-system up to isomorphism. In \cite{W}, Weiss constructs for every countable group $G$ a minimal model-universal flow. It is natural to ask in what sense a minimal model-universal flow must be unique. Here, we prove a strong negative result. Given a family $\{Y_i: i\in I\}$ of minimal $G$-flows, we say that $\{Y_i: i\in I\}$ is \emph{mutually disjoint} if the product $\prod_{i\in I} Y_i$ is minimal. In particular, this implies that the $Y_i$ are pairwise non-isomorphic $G$-flows.
\vspace{2 mm}

\begin{theorem}
	\label{IntroThm:MinModels}
	For any countable group $G$, there is a mutually disjoint family $\{Y_i: i< \mathfrak{c}\}$ of minimal model-universal flows.
\end{theorem}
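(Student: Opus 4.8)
The plan is to upgrade the construction of a single minimal model-universal flow into a \emph{parametrized} construction $p \mapsto Y_p$, and then to exhibit $\mathfrak{c}$ many parameters whose associated flows are mutually disjoint. The first, purely topological, observation reduces the problem to a finitary one: for any family $\{Y_i : i \in I\}$ of minimal flows, the full product $\prod_{i \in I} Y_i$ is minimal if and only if every \emph{finite} subproduct $\prod_{i \in F} Y_i$ is minimal. Indeed, the product topology is generated by cylinders constraining only finitely many coordinates, so a point has dense orbit in $\prod_{i \in I} Y_i$ exactly when each of its finite-coordinate restrictions has dense orbit in the corresponding finite subproduct; combined with the fact that a factor of a minimal flow is minimal, this gives the equivalence. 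Thus it suffices to produce a family of size $\mathfrak{c}$ all of whose finite subfamilies have minimal product.

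Next I would realize each $Y_p$ as a minimal subshift whose internal structure splits into a universal \emph{content} layer and a rigid \emph{skeleton}. The content layer carries the measure-coding underlying model-universality: over an increasing sequence of finite windows exhausting $G$, one records the finite colorings arising from arbitrary finite Borel partitions of an arbitrary free system, so that every free $G$-system is recovered as $(Y_p,\nu)$ for a suitable invariant $\nu$, just as in Weiss's theorem \cite{W}, and independently of $p$. The skeleton enforces minimality, ensuring that every pattern which occurs, occurs syndetically; this is where the parameter enters. Taking $p \subseteq \mathbb{N}$, at the $n$-th scale of the construction I insert one of two interchangeable marker words according to whether $n \in p$. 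Varying $p$ alters neither the minimality nor the model-universality of $Y_p$, but records $p$ in the topological dynamics in a readable way.

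For the disjointness, choose $\{p_i : i < \mathfrak{c}\}$ to be an independent family of subsets of $\mathbb{N}$, meaning that for distinct $i_1,\dots,i_m$ and any $\epsilon \in \{0,1\}^m$ the set $\{n : \mathbf{1}[n \in p_{i_k}] = \epsilon_k \text{ for all } k\}$ is infinite; such families of size $\mathfrak{c}$ exist. Set $Y_i := Y_{p_i}$. To see that a finite subproduct $\prod_{k=1}^m Y_{p_{i_k}}$ is minimal, I would run the pattern-matching argument that establishes minimality of each factor, but simultaneously across the $m$ coordinates. A basic open set prescribes, in each coordinate, a legal finite pattern together with the marker words visible at the scales it meets; aligning these simultaneously requires scales $n$ at which the markers of $Y_{p_{i_1}},\dots,Y_{p_{i_m}}$ realize a prescribed element of $\{0,1\}^m$, and independence of $\{p_i\}$ supplies infinitely many (indeed suitably dense) such scales, so the required translate exists. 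Hence every finite subproduct is minimal, and by the reduction above $\prod_{i<\mathfrak{c}} Y_i$ is minimal; being mutually disjoint, the $Y_i$ are in particular pairwise non-isomorphic.

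The crux of the argument, and the step I expect to be the main obstacle, is designing the construction so that the content and skeleton layers genuinely decouple. The content must stay rich enough that, for every free $(X,\mu)$ and every $p$, some invariant measure on $Y_p$ reproduces it, while the skeleton must be rigid enough that the parameter $p$ governs the \emph{only} freedom available when aligning patterns, so that the combinatorial independence of the $p_i$ converts directly into dynamical mutual disjointness. These two demands pull in opposite directions—universality toward largeness and flexibility, disjointness toward rigidity and independence—and reconciling them in a single scheme is the delicate point. Once the marker-encoding is arranged to be syndetic and to leave the content untouched, the reduction to finite subproducts together with the elementary combinatorics of an independent family completes the proof.
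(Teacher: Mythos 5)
Your reduction of mutual disjointness to minimality of all finite subproducts is correct, and the idea of a parametrized family $p\mapsto Y_p$ of minimal model-universal subshifts is in the same spirit as the paper, which builds a continuum $\{X_\alpha : \alpha\in 2^\omega\}$ along the branches of a tree of subshifts. But the step you yourself flag as the crux --- deducing minimality of $\prod_{k=1}^m Y_{p_{i_k}}$ from combinatorial independence of the parameter sets $p_{i}$ --- is a genuine gap, and I do not believe it can be repaired in the form stated. Minimality of the product means that an \emph{arbitrary} point $(y_1,\dots,y_m)$ has dense orbit; the coordinates of such a point may be maximally correlated, and the fact that the marker words of the various factors disagree at infinitely many scales does nothing to decorrelate them. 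Worse, if the skeleton is ``rigid'' in the sense you require (marker positions readable and essentially deterministic at each scale), then all the $Y_p$ tend to share a common nontrivial factor recording the scale structure (for $G=\mathbb{Z}$, a common odometer factor), which destroys disjointness outright no matter which marker word is inserted at each scale. Disjointness of minimal flows is obstructed by common factors and, more generally, by nontrivial joinings, and bookkeeping about where the markers sit does not address this.

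The paper resolves exactly this tension by a different mechanism. Its skeleton is not rigid: the marker positions come from the subshift $Y_C$ of maximal $C$-spaced subsets of $G$, which has no deterministic structure, and the whole construction is arranged so that each $X_s$ is \emph{strongly irreducible}. Disjointness then comes from an external theorem (Corollary 6.8 of \cite{GTWZ}): every minimal flow is disjoint from every strongly irreducible subshift. Combined with the facts that the branches $\alpha$ with a tail of zeros (whose limits $X_\alpha$ are strongly irreducible) are dense in $2^\omega$, that the parametrization $\alpha\mapsto X_\alpha$ is continuous and injective, and that disjointness from a fixed minimal flow is a $G_\delta$ condition on the parameter, one concludes that each minimal flow is disjoint from $X_\alpha$ for comeagerly many $\alpha$; Mycielski's theorem then extracts a perfect mutually disjoint subfamily of size $\mathfrak{c}$. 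To salvage your approach you would need to replace the independence of the $p_i$ by a disjointness criterion of comparable strength; some such substantive input appears unavoidable.
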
 
\vspace{2 mm}

Let us call a $G$-flow $Y$ \emph{weakly model-universal} if for every free $G$-system $(X, \mu)$, there is an invariant measure $\nu$ on $Y$ so that $(Y, \nu)$ is a factor of $(X, \mu)$. In \cite{W}, Weiss first constructs a minimal, essentially free, weakly model-universal flow, then proves that any flow with these properties admits an almost 1-1 extension which is model-universal. We instead build our model-universal flows in one step.

A recent result of Elek in~\cite{E} shows the existence of a \emph{free} minimal model-universal flow. Recall that a $G$-flow $Y$ is free when for any $y\in Y$ and any $g\in G\setminus \{1_G\}$, we have $gy\neq y$. In the last section of this paper, we show how one can deduce this result using rather soft arguments.

\begin{theorem}
	\label{IntroThm:Free}
	Let $Y$ be a minimal, model-universal, Cantor flow. Then there is an almost 1-1 extension $\pi\colon Z\to Y$ so that $Z$ is free, minimal, and model-universal.
\end{theorem}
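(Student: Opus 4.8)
The plan is to prove that model-universality is inherited by almost 1-1 extensions, and then to manufacture a free one. Since $Y$ is model-universal it is a model of some free $G$-system, so by the remark above $Y$ is essentially free; writing $Y_0=\{y\in Y: G_y=\{1_G\}\}$ for its set of free points and $F_g=\{y:gy=y\}$, the set $Y_0=Y\setminus\bigcup_{g\neq 1_G}F_g$ is a dense, $G$-invariant $G_\delta$, hence comeager. The key soft input is that an almost 1-1 extension $\pi\colon Z\to Y$ is harmless for the measures we care about: if $(X,\mu)$ is free and $(Y,\mu_Y)\cong(X,\mu)$ (using that $Y$ is model-universal), then $\mu_Y(Y_0)=1$, so $\mu_Y$ concentrates on the locus over which $\pi$ is injective. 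Hence $\mu_Y$ lifts to an invariant measure $\nu$ on $Z$ and $\pi$ is a measure isomorphism $(Z,\nu)\cong(Y,\mu_Y)\cong(X,\mu)$. Thus any free, minimal almost 1-1 extension of $Y$ is automatically model-universal, and the whole problem is to build one.

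For the construction I would realize $Z$ as an orbit closure inside $Y\times 2^G$, where $2^G$ carries the shift. Fix a free point $y_0\in Y_0$ (every point of the minimal flow $Y$ has dense orbit) and a relatively clopen set $B\subseteq Y_0$, and define the $G$-equivariant coding $c_B\colon Y_0\to 2^G$ by $c_B(y)(g)=1_B(g^{-1}y)$. This is continuous on $Y_0$, so setting $z_0=(y_0,c_B(y_0))$ and $Z=\overline{G z_0}\subseteq Y\times 2^G$, the projection $\pi=\mathrm{pr}_1\colon Z\to Y$ enjoys two automatic properties. First, the fiber over $y_0$ is the single point $z_0$: any $(y_0,w)\in Z$ is a limit of points $g_n z_0=(g_n^{-1}y_0,\,c_B(g_n^{-1}y_0))$ with $g_n^{-1}y_0\to y_0$ inside $Y_0$, and continuity of $c_B$ on $Y_0$ forces $w=c_B(y_0)$. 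Second, this singleton fiber forces minimality: any minimal subflow $M\subseteq Z$ satisfies $\pi(M)=Y$, so $M$ meets $\pi^{-1}(y_0)=\{z_0\}$, whence $z_0\in M$ and $Z=\overline{Gz_0}\subseteq M$. The same fiber computation over every $y\in Y_0$ shows $\pi$ is injective over the comeager set $Y_0$, so $\pi$ is an almost 1-1 extension. The only property left to secure is freeness of $Z$, and here everything reduces to the second coordinate: since $(y,w)=g(y,w)$ with $g\neq 1_G$ forces $gw=w$, the flow $Z$ is free as soon as $W_0:=\mathrm{pr}_2(Z)=\overline{G\,c_B(y_0)}\subseteq 2^G$ contains no $g$-fixed point, i.e.\ as soon as $W_0$ is a \emph{free} subshift.

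The main obstacle is therefore to choose the coding set $B$ so that the coded point $c_B(y_0)$ is \emph{hyperaperiodic}, meaning its shift-orbit closure $W_0$ is free. Two features make this delicate and are exactly where the hypotheses are spent. A coding by a genuinely clopen subset of $Y$ can only reproduce $Y$ itself and so inherits its non-free points; the room to break stabilizers comes precisely from allowing $B$ to be relatively clopen in $Y_0$ with topological boundary contained in the nowhere-dense set $\bigcup_{g\neq 1_G} F_g$, so that $c_B$ is discontinuous on $Y$ and can split the orbit as it approaches a fixed point. I would build $B$ by a back-and-forth over an enumeration $G\setminus\{1_G\}=\{h_1,h_2,\dots\}$, at stage $k$ refining $B$ near the nowhere-dense set $F_{h_k}$ so as to guarantee that no limit of codes $c_B(y_n)$, with $y_n\in Gy_0$, is fixed by $h_k$; the existence of free minimal subshifts of $2^G$ for arbitrary countable $G$ is the template showing such aperiodic patterns exist, and the task is to realize one through the clopen algebra of $Y$ along the dense free orbit $Gy_0$. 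Granting that $B$ can be so chosen, $Z=\overline{Gz_0}$ is a free, minimal, almost 1-1 extension of $Y$, and by the first paragraph it is model-universal, completing the proof.
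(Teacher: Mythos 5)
Your overall strategy is close in spirit to the paper's: build an almost 1-1 extension whose injectivity locus has full measure for every free invariant measure, then lift measures through $\pi^{-1}$. But there are two problems. First, the blanket claim in your opening paragraph that ``any free, minimal almost 1-1 extension of $Y$ is automatically model-universal'' is false: the injectivity locus of an almost 1-1 extension is merely comeager, and a comeager set can be null for every relevant invariant measure --- the paper explicitly notes that almost 1-1 extensions can have completely different measure-theoretic behavior from the base flow (this is the engine of Weiss's construction). What rescues your particular construction is the stronger fact that your $\pi$ is injective over the $G$-invariant set $Y_0$ of free points, which has $\mu_Y$-measure $1$ for every free $\mu_Y$; you do verify this, but the general statement should be dropped. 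The paper isolates exactly this issue via the notion of \emph{strongly regular open} sets, i.e.\ regular open $A$ with $\mu(A)+\mu(Y\setminus\overline{A})=1$ for every free invariant $\mu$, which is what guarantees that the injectivity locus of its Stone-space extension is conull.

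Second, and more seriously, the heart of the theorem --- producing freeness --- is not proved. Your third paragraph reduces everything to choosing the relatively clopen set $B\subseteq Y_0$ so that the coded subshift $\overline{G\,c_B(y_0)}\subseteq 2^G$ is free, and then says ``Granting that $B$ can be so chosen\dots''. That grant is the entire difficulty; a back-and-forth ``refining $B$ near $F_{h_k}$'' is not an argument, and it is not at all clear that a single $\{0,1\}$-valued coding can simultaneously (i) be continuous on $Y_0$ (which your fiber computation, and hence the almost 1-1 and measure-lifting claims, depend on) and (ii) force every limit of codes along orbit points approaching a $g$-fixed point to fail to be $g$-periodic. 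The paper resolves this with a concrete lemma: for each $g\neq 1_G$ one partitions $Y\setminus Y_g$ into three relatively clopen pieces $A_g$, $B_g$, $C_g$, each disjoint from its own $g$-translate; these are automatically strongly regular open, and in the Stone space of a countable invariant algebra containing them every ultrafilter contains one of the three pieces and is therefore moved by $g$. You would need to supply an argument of comparable substance for your coding set (more plausibly, for countably many coding sets, one per group element, which is essentially what the Stone-space formulation packages) before the proof is complete.
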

\vspace{2 mm}

As almost 1-1 extensions always preserve minimality and disjointness, we can strengthen Theorem~\ref{IntroThm:MinModels} as follows.
\vspace{2 mm}

\begin{theorem}
	\label{IntroThm:FreeDisjoint}
	For any countable group $G$, there is a mutually disjoint family $\{Y_i: i< \mathfrak{c}\}$ of free, minimal, model-universal flows.
\end{theorem}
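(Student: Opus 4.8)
The plan is to start from the family $\{Y_i : i < \mathfrak{c}\}$ produced by Theorem~\ref{IntroThm:MinModels} and replace each $Y_i$ by a free extension supplied by Theorem~\ref{IntroThm:Free}. Before invoking the latter, I would first check that the $Y_i$ may be taken to be Cantor flows. Since there exists a free $G$-system (e.g.\ a Bernoulli shift), each model-universal $Y_i$ is essentially free; as $G$ is infinite, a minimal essentially free flow can have no isolated point (an isolated point would force the whole flow to be a single finite orbit, contradicting essential freeness for infinite $G$), so each $Y_i$ is perfect. Provided the construction behind Theorem~\ref{IntroThm:MinModels} takes place in a zero-dimensional ambient space, the $Y_i$ are then Cantor flows, and Theorem~\ref{IntroThm:Free} applies to each, yielding almost 1-1 extensions $\pi_i\colon Z_i\to Y_i$ with every $Z_i$ free, minimal, and model-universal. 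These three properties are handed to us directly, so the only thing left to verify is that $\{Z_i : i<\mathfrak{c}\}$ is mutually disjoint, i.e.\ that $\prod_{i<\mathfrak{c}} Z_i$ is minimal.

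The key reduction is that, for any family of minimal flows, minimality of the full product is equivalent to minimality of every finite subproduct. A basic open set in $\prod_i Z_i$ constrains only finitely many coordinates $i\in F$, and if the orbit of the corresponding finite tuple is dense in $\prod_{i\in F} Z_i$ one can steer $\prod_i Z_i$ into that basic open set; the converse follows by projecting. This is what lets us bypass the size of the index set: a Baire-category argument in the full, non-metrizable product $\prod_{i<\mathfrak{c}} Z_i$ would be illegitimate, whereas on finite subproducts it is harmless.

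So I would fix a finite $F\subseteq\mathfrak{c}$ and study $\prod_{i\in F}\pi_i\colon \prod_{i\in F} Z_i \to \prod_{i\in F} Y_i$, whose target is minimal because $\{Y_i\}$ is mutually disjoint. Let $D_i\subseteq Z_i$ be the dense $G_\delta$ set of points lying in a singleton $\pi_i$-fibre; since $F$ is finite, $\prod_{i\in F} D_i$ is dense in $\prod_{i\in F} Z_i$. Now take any minimal subflow $M\subseteq \prod_{i\in F} Z_i$. Its image is a minimal subflow of the minimal flow $\prod_{i\in F} Y_i$, hence equals all of it, so $\prod_{i\in F}\pi_i$ maps $M$ onto $\prod_{i\in F} Y_i$. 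For any point of $\prod_{i\in F} Y_i$ all of whose coordinates have singleton fibres, its unique preimage must therefore lie in $M$; these preimages are exactly the points of $\prod_{i\in F} D_i$. As $M$ is closed and contains the dense set $\prod_{i\in F} D_i$, we conclude $M=\prod_{i\in F} Z_i$, so every finite subproduct is minimal.

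Combining the last two paragraphs shows $\prod_{i<\mathfrak{c}} Z_i$ is minimal, which is the assertion that $\{Z_i\}$ is mutually disjoint. I expect the main obstacle to be exactly the interaction of almost 1-1 extensions with the uncountable index set: the tempting move of intersecting $\mathfrak{c}$-many dense $G_\delta$ sets in the full product fails outright, and it is the reduction to finite subproducts that rescues the argument. The only other point needing care is confirming that the $Y_i$ are genuinely Cantor rather than merely metrizable, which should be immediate from inspecting the construction underlying Theorem~\ref{IntroThm:MinModels}.
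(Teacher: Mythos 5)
Your proposal is correct and follows exactly the route the paper takes: apply Theorem~\ref{IntroThm:Free} to the family from Theorem~\ref{IntroThm:MinModels} (whose members are indeed Cantor flows, being infinite minimal subshifts of $(2^\omega)^G$) and invoke the fact that almost 1-1 extensions preserve minimality and disjointness. The paper states that preservation fact without proof, whereas you supply the standard argument for it (reduction to finite subproducts, then density of the product of the singleton-fibre sets forcing any minimal subflow to be everything), and your argument is sound.
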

\vspace{2 mm}

I would like to thank Benjamin Weiss for many helpful comments on an earlier draft, as well as the anonymous referee for suggesting many improvements.

\section{Basic examples of model-universal flows}
\label{Sec:Basic}

We briefly collect a few simple examples which will be important in what follows. Let $K$ be a compact space. Then $K^G$ is a $G$-flow with the right shift action, where given $g, h\in G$ and $s\in K^G$, we have $g\cdot s(h) = s(hg)$. Mostly we take $K = 2^n$ or $2^\omega$. 
\vspace{2 mm}

\begin{prop}
	\label{Prop:ModelUnivEx}
	The flow $(2^\omega)^G$ is model-universal.
\end{prop}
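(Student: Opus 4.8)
The plan is to realize an arbitrary free $G$-system $(X,\mu)$ as an invariant measure on $(2^\omega)^G$ by constructing a single Borel, $G$-equivariant injection $f\colon X\to (2^\omega)^G$ and then pushing $\mu$ forward along it. Since $(X,\mu)$ is a standard probability space, I would first fix a Borel injection $\iota\colon X\to 2^\omega$, which exists because $X$ is standard Borel. The role of $\iota$ is simply to ``name'' each point of $X$ by an infinite binary string; no dynamical information is extracted from it.

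Next I would define $f$ coordinatewise by $f(x)(h) = \iota(hx)$ for $h\in G$, so that $f(x)$ records the $\iota$-name of every point in the orbit of $x$. The key computation is equivariance: using the right-shift convention $g\cdot s(h) = s(hg)$, one checks that $f(gx)(h) = \iota(hgx) = f(x)(hg) = (g\cdot f(x))(h)$, so $f(gx) = g\cdot f(x)$. Borel measurability of $f$ follows coordinatewise, since each map $x\mapsto \iota(hx)$ is a composition of the Borel action of $h$ with the Borel map $\iota$, and a map into a product space is Borel exactly when all of its coordinate maps are.

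Injectivity is the one place I would invoke $\iota$ directly: if $f(x) = f(y)$, then evaluating at $h = 1_G$ gives $\iota(x) = \iota(y)$, whence $x = y$. Thus $f$ is a Borel injection of a standard Borel space, hence a Borel isomorphism onto its (Borel) image by the Lusin--Souslin theorem. I would then set $\nu := f^*\mu$; this is an invariant Borel probability measure on $(2^\omega)^G$ because $f$ is equivariant and $\mu$ is $G$-invariant, which reduces to the one-line identity $f^{-1}(gA) = g\,f^{-1}(A)$. By construction $f$ witnesses that $(X,\mu)$ and $((2^\omega)^G,\nu)$ are isomorphic $G$-systems, which is exactly what model-universality demands.

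I do not expect a serious obstacle: the content is just the standard universality of the full shift on the alphabet $2^\omega$, and freeness of $(X,\mu)$ is not even needed for this particular flow. The only point requiring mild care is the descriptive-set-theoretic bookkeeping --- confirming that $f$ is Borel, that its image is Borel, and that the pushforward yields a genuine system isomorphism rather than merely a factor map --- all of which are routine consequences of $X$ being standard and $\iota$ being injective.
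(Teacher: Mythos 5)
Your proof is correct and is essentially the paper's own argument: the paper fixes a Borel bijection $\phi\colon X\to 2^\omega$ and defines $\psi(x)(g) = \phi(g\cdot x)$, exactly your coordinatewise orbit-naming map, with the same equivariance and injectivity checks. You simply supply more of the routine details (the right-shift computation, Lusin--Souslin) and correctly observe that freeness is not actually used here.
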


\begin{proof}
Let $(X, \mu)$ be a free $G$-system, and fix $\phi\colon X\to 2^\omega$ a Borel bijection. Now define $\psi\colon X\to (2^\omega)^G$ via $\psi(x)(g) = \phi(g\cdot x)$. Then $\psi$ is injective, and $(X, \mu)\cong ((2^\omega)^G, \psi^*\mu)$.
\end{proof}
\vspace{2 mm}

A \emph{subshift} of $K^G$ is a closed, $G$-invariant subspace. The following family of subshifts of $2^G$ will be an important source of weakly model-universal flows. Let $Q\subseteq G$ be a finite symmetric set. We say that $S\subseteq G$ is \emph{$Q$-spaced} if whenever $g, h\in S$ with $g\neq h$, then $Qg\cap Qh = \emptyset$. We say that $S$ is \emph{$Q$-syndetic} if we have $\bigcup_{g\in Q} gS = \bigcup_{g\in S} Qg = G$. Notice that maximal $Q$-spaced sets exist and are $Q^2$-syndetic. Conversely, any $Q^2$-syndetic $Q$-spaced set is a maximal $Q$-spaced set. We define
$$Y_Q = \{s\in 2^G: s^{-1}(\{1\})\text{ is a maximal $Q$-spaced set}\}.$$

\begin{prop}
	\label{Prop:SpacedWeakUniv}
	The flow $Y_Q$ is weakly model-universal.
\end{prop}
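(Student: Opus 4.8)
The plan is to exhibit $Y_Q$ as a factor of an arbitrary free system by choosing a single Borel set $A\subseteq X$ and reading off, along each orbit, the group elements that carry $x$ into $A$. Concretely, given a free $G$-system $(X,\mu)$ and any Borel $A\subseteq X$, define $f_A\colon X\to 2^G$ by $f_A(x)(g)=1$ if and only if $gx\in A$. Using the right shift convention $g\cdot s(h)=s(hg)$, a one-line computation gives $f_A(hx)(g)=1 \iff (gh)x\in A \iff (h\cdot f_A(x))(g)=1$, so $f_A$ is $G$-equivariant, and it is clearly Borel. Provided $f_A$ takes values in $Y_Q$, setting $\nu=(f_A)_*\mu$ produces a $G$-invariant measure and realizes $(Y_Q,\nu)$ as a factor of $(X,\mu)$, which is exactly what weak model-universality demands.

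So the entire content is to choose $A$ so that, for \emph{every} $x$, the set $S_x:=f_A(x)^{-1}(\{1\})=\{g\in G: gx\in A\}$ is a maximal $Q$-spaced subset of $G$. Here freeness is essential: it makes $g\mapsto gx$ injective, so that combinatorial conditions on $A$ transfer faithfully to the subsets $S_x\subseteq G$. Since $Qg\cap Qh\neq\emptyset$ iff $gh^{-1}\in Q^2$, unwinding the definitions shows that $S_x$ is $Q$-spaced for all $x$ precisely when $A\cap kA=\emptyset$ for every $k\in Q^2\setminus\{1_G\}$, and that $S_x$ is \emph{maximal} $Q$-spaced precisely when, in addition, every point outside $A$ is of the form $k\cdot a$ for some $a\in A$ and some $k\in Q^2\setminus\{1_G\}$.

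These two conditions say exactly that $A$ is a maximal independent set for the graph $\mathcal G$ on $X$ defined by $x\mathrel{\mathcal G}y$ iff $y=kx$ for some $k\in Q^2\setminus\{1_G\}$. As $Q$ is finite, $\mathcal G$ is a Borel graph of bounded degree, so by the Kechris--Solecki--Todorcevic theorem it admits a Borel maximal independent set $A$. Independence of $A$ gives that each $S_x$ is $Q$-spaced, and maximality gives that each $S_x$ is a maximal (equivalently, $Q^2$-syndetic) $Q$-spaced set, using the characterization of maximal $Q$-spaced sets recorded before the statement. Hence $f_A$ maps into $Y_Q$, and the first paragraph finishes the argument.

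The one genuine obstacle is the \emph{Borel} existence of $A$: a naive greedy selection of a maximal $Q$-spaced set along an enumeration of $G$ need not be Borel on $X$. This is precisely the point handled by the bounded-degree Borel maximal-independent-set theorem (equivalently, one may first take a Borel proper colouring of $\mathcal G$ and build $A$ greedily through the finitely many colour classes). Everything else---equivariance and measurability of $f_A$, invariance of $\nu$, and the combinatorial translation above---is routine.
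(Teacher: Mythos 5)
Your proof is correct, and it uses the same factor map as the paper: fix a Borel set $A\subseteq X$ and send $x$ to the indicator of $\{g : gx\in A\}$, with the same translation of ``$Q$-spaced'' and ``maximal'' into the conditions $A\cap kA=\emptyset$ for $k\in Q^2\setminus\{1_G\}$ and domination of $X\setminus A$ by $Q^2 A$. Where you genuinely diverge is in how $A$ is produced. The paper constructs $A$ by a measure exhaustion argument: using freeness it finds, inside any positive-measure set, a positive-measure $Q^2$-disjoint subset, and then exhausts to get a $Q^2$-disjoint $A$ with $\mu\left(\bigcup_{g\in Q^2}gA\right)=1$; the resulting $S_x$ is maximal $Q$-spaced only for $\mu$-almost every $x$, and the set $A$ depends on the measure $\mu$. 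You instead invoke the Kechris--Solecki--Todorcevic theorem (via a bounded-degree Borel proper colouring) to get a Borel maximal independent set for the graph generated by $Q^2\setminus\{1_G\}$, which yields a single $A$, independent of any measure, for which $S_x$ is maximal $Q$-spaced for \emph{every} $x$ in the free part. Your route imports a standard but heavier tool from Borel combinatorics and buys a pointwise, measure-free conclusion (the same $A$ works simultaneously for all invariant measures); the paper's route is more elementary and self-contained but only almost-everywhere and measure-dependent. Either suffices for weak model-universality, since a factor map need only be defined on a conull set.
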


\begin{rem}
	This proposition is also one of the key ingredients used by Weiss (see~\cite{W}, Lemma 2.2).
\end{rem}

\begin{proof}
	Let $(X, \mu)$ be a free $G$-system. By freeness, we can find for every Borel $B\subseteq X$ with $\mu(B) > 0$ a Borel subset $A\subseteq B$ with $\mu(A) > 0$ and with $gA\cap A = \emptyset$ for any $g\in Q^2$. Let us call a Borel set $A$ with this property a \emph{$Q^2$-disjoint} set. Now if $\bigcup_{g\in Q^2} gA$ doesn't have full measure, we can find a $Q^2$-disjoint Borel set $A'\subseteq X$ with $\mu(A') > 0$ and $gA\cap A' = \emptyset$ for every $g\in Q^2$. As $Q$ is assumed symmetric, it follows that $A\cup A'$ is also $Q^2$-disjoint.
	
	Thus using a measure exhaustion argument, we can find $A\subseteq X$ a $Q^2$-disjoint Borel set so that $\mu\left(\bigcup_{g\in Q^2} gA\right) = 1$. We now let $\phi\colon X\to 2^G$ be the map given by $\phi(x)(g) = 1$ iff $gx\in A$. Then for almost every $x\in X$, $\phi(x)^{-1}(\{1\})$ is both $Q^2$-syndetic and $Q$-spaced, so a maximal $Q$-spaced set. It follows that $Y_Q$ contains the closed support of $\phi^*\mu$, so $(Y_Q, \phi^*\mu)$ is a factor of $(X, \mu)$.
\end{proof}
\vspace{2 mm}

We end the section by noting a simple closure property of (weakly) model-universal flows.
\vspace{0 mm}

\begin{prop}
	\label{Prop:Products}
	Let $Y_n$ be weakly model-universal $G$-flows. Then $Y := \prod_n Y_n$ is weakly model-universal. If at least one of the $Y_n$ is model-universal, then so is $Y$.
\end{prop}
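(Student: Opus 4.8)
The plan is to assemble the coordinate factor maps into a single map into the product. First, fix a free $G$-system $(X,\mu)$. For each $n$, weak model-universality of $Y_n$ yields a Borel set $X_n\subseteq X$ with $\mu(X_n)=1$ and a $G$-equivariant Borel map $f_n\colon X_n\to Y_n$ with $f_n^*\mu$ an invariant measure on $Y_n$. I would set $X' = \bigcap_n X_n$, which has full measure since it is a countable intersection of conull sets, and define $f\colon X'\to Y = \prod_n Y_n$ coordinatewise by $f(x) = (f_n(x))_n$.

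Next I would verify the routine properties of $f$. It is Borel because each coordinate map $f_n$ is Borel and $Y$ carries the product Borel structure (a countable product, so $Y$ is again compact metrizable). It is $G$-equivariant because the $G$-action on $Y$ is coordinatewise and each $f_n$ is equivariant. Setting $\nu = f^*\mu$, the measure $\nu$ is a $G$-invariant Borel probability measure on $Y$ (invariance following from invariance of $\mu$ together with equivariance of $f$), and by construction $(Y,\nu)$ is a factor of $(X,\mu)$. This establishes that $Y$ is weakly model-universal.

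For the second assertion, suppose some $Y_{n_0}$ is model-universal. Then for the given $(X,\mu)$ I may choose the map $f_{n_0}$ above to be injective, since model-universality of $Y_{n_0}$ provides an invariant measure $\nu_{n_0}$ with $(Y_{n_0},\nu_{n_0})\cong(X,\mu)$, that is, an injective factor map. The product map $f$ then inherits injectivity from its $n_0$-th coordinate, so $f$ witnesses $(Y,\nu)\cong(X,\mu)$, and hence $Y$ is model-universal.

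I expect no serious obstacle in this argument; the only points requiring any care are that the relevant conull sets can be intersected (handled by countability of the index set) and that injectivity of a single coordinate already forces injectivity of the product map. Both are immediate once the coordinate maps are assembled into $f$.
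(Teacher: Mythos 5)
Your argument is correct and is essentially identical to the paper's proof: assemble the coordinate factor maps on the countable intersection of conull domains, push forward $\mu$, and observe that injectivity of a single coordinate gives injectivity of the product map. No further comment is needed.
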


\begin{proof}
	Let $(X, \mu)$ be a free $G$-system, and for each $n< \omega$, let  $\phi_n\colon X_n\to Y_n$ be a Borel, $G$-equivariant map, where $X_n\subseteq X$ satisfies $\mu(X_n) = 1$. Set $X' = \bigcap_n X_n$. Then $\mu(X') = 1$, and the map $\phi\colon X'\to \prod_n Y_n$ given by $\phi(x) = (\phi_n(x))_{n< \omega}$ is Borel and $G$-equivariant. If for some $n< \omega$, the map $\phi_n$ is injective, then $\phi$ will also be injective.
\end{proof}
\vspace{2 mm}

\section{Strongly irreducible subshifts}
\label{Sec:StrIrred}

The key technical tool we use here is the notion of a \emph{strongly irreducible subshift}. First, we introduce some general terminology.
Write $\fin{G}$ for the collection of finite subsets of $G$. Given $S_1, S_2\subseteq G$ and  symmetric $D\in \fin{G}$ with $1_G\in D$, we say that $S_1$ and $S_2$ are \emph{$D$-apart} if $DS_1\cap DS_2 = \emptyset$. Let $A$ be a finite set. If $Y\subseteq A^G$ is a subshift and $F\in \fin{G}$, we define the \emph{$F$-patterns} of $Y$ to be the set $S_F(Y) := \{s|_F: s\in Y\}\subseteq A^F$. Given $\alpha\in S_F(Y)$, we define the basic clopen neighborhood $N_Y(\alpha) := \{y\in Y: y|_F = \alpha\}$. If $F\in \fin{G}$, $S\subseteq G$, $\alpha\in A^F$, and $\beta\in A^S$, we say that $\alpha$ \emph{appears in} $\beta$ if there is $g\in G$ with $Fg\subseteq S$ and $\beta(fg) = \alpha(f)$ for each $f\in F$. We say in this case that $\alpha$ \emph{appears at} $g\in G$.

We say that $Y$ is \emph{strongly irreducible} if there is $D\in \fin{G}$ so that for any $F_0, F_1\in \fin{G}$ which are $D$-apart and any $\alpha_i\in S_{F_i}(Y)$, there is $y\in Y$ with $y|_{F_i} = \alpha_i$. We sometimes say that $Y$ is $D$-irreducible. We will frequently use the following facts about strongly irreducible subshifts. Here $A$ and $B$ are finite sets.
\begin{enumerate}
	\item 
	If $Y\subseteq A^G$ is $D_Y$-irreducible and $Z\subseteq B^G$ is $D_Z$-irreducible, then $Y\times Z\subseteq (A\times B)^G$ is $(D_Y\cup D_Z)$-irreducible.
	\item 
	Suppose $Y\subseteq A^G$ is $D$-irreducible and $\phi\colon Y\to B^G$ is continuous and $G$-equivariant. By continuity, there is $F\in \fin{G}$ so that $\phi(y)(1_G)$ depends only on $y|_F$. Then $Z := \phi[Y]$ is $DF$-irreducible.
\end{enumerate}
We will also need a method of making explicit choices of patterns in $S_F(Y)$. To that end, suppose that $A$ is linearly ordered, and enumerate the group $G$ in some fashion. This allows us to order $S_F(Y)$ lexicographically. We will use this ordering in the following two ways. Fix $Y\subseteq A^G$ a $D$-irreducible subshift.
\begin{enumerate}
	\item 
	If $F_0,...,F_{n-1}\in \fin{G}$ are pairwise $D$-apart, $\alpha_i\in S_{F_i}(Y)$, and $E\in \fin{G}$ contains each $F_i$, then we let $\mathrm{Conf}_Y(\alpha_0,...,\alpha_{n-1}, E)\in S_E(Y)$ be the lexicographically least $E$-pattern $\beta$ satisfying $\beta|_{F_i} = \alpha_i$.
	
	\item 
	Every strongly irreducible subshift is topologically transitive. In particular, fix $F\in \fin{G}$. Then for any $E\in \fin{G}$ containing at least $|S_F(Y)|$ many disjoint right translates of $DF$, there is $\beta\in S_E(Y)$ so that every $\alpha\in S_F(Y)$ appears in $\beta$. We let $\mathrm{Trans}_Y(F, E)$ be the lexicographically least $E$-pattern with this property.
\end{enumerate}
Most of the time, we take $A = 2^n$ for some $n< \omega$, and we take the lexicographic ordering on $2^n$ as the ordering on $A$.
\vspace{2 mm}

\section{The operator $\Phi$}
\label{Sec:Phi}

A subset $S\subseteq G$ is called \emph{syndetic} if $S$ is $Q$-syndetic for some $Q\in \fin{G}$. Given $F\in \fin{G}$ with $1_G\in F$ and $Y\subseteq A^G$ a subshift, we say that $Y$ is \emph{$F$-minimal} if for every $y\in Y$, every $\alpha\in S_F(Y)$ appears in $y$. Equivalently, for every $y\in Y$, every $\alpha\in S_F(Y)$ appears syndetically often. The following observation will be useful; suppose $Y\subseteq A^G$ is $F$-minimal and that every $\alpha\in S_F(Y)$ appears $E$-syndetically for some $E\in \fin{G}$. Then every such $\alpha$ appears in every $\beta \in S_{FE}(Y)$.

The following is our main method of producing strongly irreducible, $F$-minimal flows. First, recalling the flow $Y_Q$ from section~\ref{Sec:Basic}, we note that $Y_Q$ is $Q^3$-irreducible. Now let $Y\subseteq A^G$ be $D$-irreducible. Let $E\in \fin{G}$ be symmetric, contain $D$, and be large enough to contain at least $|S_F(Y)|\leq |A|^{|F|}$ many disjoint right translates of $DF$. Let $C\in \fin{G}$ be symmetric with $E^5\subseteq C$. We define a continuous, $G$-equivariant map $\phi\langle Y, F, E, C\rangle = \phi\colon Y\times Y_C\to A^G$ as follows. Suppose $(y, s)\in Y\times Y_C$, and write $z = \phi(y, s)$. Let $g\in G$.
\begin{itemize}
	\item 
	If $g = kh$, where $s(h) = 1$ and $k\in E$, set $z(g) = \mathrm{Trans}_Y(F, E)(k)$.
	\item 
	If there are not $k\in E^3$ and $h\in G$ with $s(h) = 1$ and $g = kh$, set $z(g) = y(g)$
	\item 
	If $g = kh$, where $s(h) = 1$ and $k\in E^3\setminus E$, set \\ 
	$z(g) = \mathrm{Conf}_Y(\mathrm{Trans}_Y(F, E),\, (h\cdot  y)|_{E^5\setminus E^3},\, E^5)(k)$.
\end{itemize}
The idea behind this definition is to reprint $y$ most of the time, using $s$ to tell us where to overwrite with the pattern $\mathrm{Trans}_Y(F, E)$, and using strong irreducibility to blend everything together. This construction is a slight modification of a construction in \cite{FTVF}; see their Figure 3 for a good illustration. 

It is routine to verify that $\phi$ as defined is continuous and $G$-equivariant. Denote by $\Phi(Y, F, E, C)$ the image of $\phi = \phi\langle Y, F, E, C\rangle$. Then $\Phi(Y, F, E, C)$ is $C^5$-irreducible. 
\vspace{2 mm}

\begin{lemma}
We have $S_F(Y) = S_F(\Phi(Y, F, E, C))$.
\end{lemma}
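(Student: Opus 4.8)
The plan is to prove the two inclusions $S_F(Y)\subseteq S_F(\Phi(Y,F,E,C))$ and $S_F(\Phi(Y,F,E,C))\subseteq S_F(Y)$ separately, after first recording a clean structural description of an arbitrary $z=\phi(y,s)$. Writing $H=s^{-1}(\{1\})$ for the (maximal $C$-spaced) support of $s$, and abbreviating the ``core'' regions as $E^3h$ for $h\in H$, I would first verify that $z$ agrees with $y$ off the union of the cores, and that on each enlarged region $E^5h$ the configuration $z$ is exactly the translate by $h$ of the pattern
\[
\beta_h:=\mathrm{Conf}_Y\bigl(\mathrm{Trans}_Y(F,E),\,(h\cdot y)|_{E^5\setminus E^3},\,E^5\bigr)\in S_{E^5}(Y).
\]
Here $\beta_h$ is well defined because $E$ and $E^5\setminus E^3$ are $D$-apart, as $DE\subseteq E^2$ is disjoint from $E(E^5\setminus E^3)$. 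The point of this step is that the three clauses in the definition of $\phi$ never conflict: since $H$ is $C$-spaced and $E^5\subseteq C$, the sets $E^5h$ are pairwise disjoint, so for $g=kh$ with $k\in E^5$ exactly one of the clauses applies, and in each case one reads off $z(kh)=\beta_h(k)$ --- using that $\beta_h|_E=\mathrm{Trans}_Y(F,E)$ on the core $E$, and $\beta_h|_{E^5\setminus E^3}=(h\cdot y)|_{E^5\setminus E^3}$ on the collar, where $z$ reverts to $y$.

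For the inclusion $S_F(Y)\subseteq S_F(\Phi)$ I would use that $\mathrm{Trans}_Y(F,E)$ contains every $\alpha\in S_F(Y)$. Choosing $s\in Y_C$ with $1_G\in H$ (extend $\{1_G\}$ to a maximal $C$-spaced set), the structural description gives $z|_E=\mathrm{Trans}_Y(F,E)$, so each $\alpha$ appears as an $F$-window of $z$, say at $g$ with $Fg\subseteq E$. Since $\Phi$ is $G$-invariant, $g\cdot z\in\Phi$ has $(g\cdot z)|_F=\alpha$, whence $\alpha\in S_F(\Phi)$.

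The substantive inclusion is $S_F(\Phi)\subseteq S_F(Y)$, which I would obtain by analysing an arbitrary $F$-window $Fg$ of $z$ (again using $G$-invariance to reduce all windows to identity-patterns). There are two good cases: if $Fg$ misses every core then $z|_{Fg}=y|_{Fg}\in S_F(Y)$, and if $Fg\subseteq E^5h$ for a single $h$ then $z|_{Fg}=\beta_h|_{Fg}$ is a restriction of the valid $E^5$-pattern $\beta_h$, hence again lies in $S_F(Y)$. The main obstacle is to rule out a window that \emph{straddles} the boundary of a core --- meeting some $E^3h$ without being engulfed by $E^5h$, or meeting two distinct cores --- since such a window mixes values of $\beta_h$ with values of $y$ and need not extend to an element of $Y$.

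This is exactly where the two-layer buffer $E\subseteq E^3\subseteq E^5$ together with the $C$-spacing of $H$ does the work. Because $F\subseteq E$, any $g$ with $Fg\cap E^3h\neq\emptyset$ satisfies $Fg\subseteq FF^{-1}E^3h\subseteq E^5h$, so a window meeting a core is automatically absorbed into the corresponding $E^5h$; and $E^5\subseteq C$ then forbids it from reaching a second core. Thus every $F$-window falls into one of the two good cases, which completes this inclusion and hence the claimed equality.
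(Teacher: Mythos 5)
Your proof is correct and takes essentially the same route as the paper: the inclusion $S_F(Y)\subseteq S_F(\Phi)$ via the appearance of $\mathrm{Trans}_Y(F,E)$ over points in the support of $s$, and the reverse inclusion by showing that any $F$-window either misses every core (hence agrees with $y$) or is absorbed into a single $E^5h$ (hence restricts the valid $E^5$-pattern $\mathrm{Conf}_Y(\cdots)$), with $C$-spacing preventing overlap. Your write-up simply makes explicit the consistency of the three defining clauses and the $D$-apartness needed for $\mathrm{Conf}_Y$ to be defined, which the paper leaves implicit.
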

 
\begin{proof}
	The $\subseteq$ direction is clear. For the $\supseteq$ direction, suppose $z\in \Phi(Y, F, E, C)$ with $z = \phi(y, s)$. It is enough to show that $z|_F\in S_F(Y)$. If there is $h\in G$ with $s(h) = 1$ and $F\cap E^3h\neq \emptyset$, then $F\subseteq E^5h$, so we have
	$$z|_F = \mathrm{Conf}_Y(\mathrm{Trans}_Y(F,E), (h\cdot y)|_{E^5\setminus E^3}, E^5)|_F$$
	If there is no such $h\in G$, then we have $z|_F = y|_F$. 
\end{proof}
\vspace{2 mm}

For any $z\in \Phi(Y, F, E, C)$, the $E$-pattern $\mathrm{Trans}_Y(F, E)$ appears in $z$, so in particular every pattern in $S_F(Y)$ appears in $z$. Hence $\Phi(Y, F, E, C)$ is $F$-minimal.  Indeed, every $F$-pattern appears $C^3$-syndetically, since maximal $C$-spaced sets are $C^2$-syndetic. So every pattern in $S_F(Y)$ appears in every pattern in $S_{C^4}(\Phi(Y, F, E, C))$.

\vspace{2 mm} 

\section{A tree of subshifts}

We now use the operator $\Phi$ to produce a tree of strongly irreducible flows. We will construct for each $s\in 2^{<\omega}$ a strongly irreducible flow $X_s\subseteq (2^{|s|})^G$ by induction. This tree will be controlled by rapidly increasing sequences $\{D_k: k< \omega\}$, $\{E_k: k< \omega\}$, and $\{F_k: k< \omega\}$ of finite symmetric subsets of $G$. We will continue to add assumptions about how rapid this needs to be, but for now, we assume that 
\begin{itemize}
	\item 
	$\bigcup_n D_n = \bigcup_n E_n = \bigcup_n F_n = G$.
	\item 
	$E_n$ contains at least $2^{|D_n|(n+1)}$-many pairwise disjoint translates of $D_n^2$
	\item 
	$F_n\supseteq E_n^5$
	\item 
	$D_{n+1}\supseteq F_n^5$
\end{itemize}
Let $X_\emptyset$ be the trivial flow. If $s\in 2^{<\omega}$ and $X_s$ is defined, and $t = s^\frown 0$, then we set $X_t = X_s\times 2^G$. Suppose we are given $k< \omega$,  $s\in 2^k$, and $t = s^\frown 1\in 2^{k+1}$. Then we set $X_t = \Phi(X_s\times 2^G, D_k, E_k, F_k)$.

In order to discuss the key properties of this construction, we think of $(2^n)^G$ as embedded into $(2^\omega)^G$ by adding zeros to the end. In this way, we can refer to the \emph{$(n\times F)$-patterns} of a subflow $Y\subseteq (2^N)^G\cong 2^{N\times G}$, the set $S_{n\times F}(Y) := \{y|_{n\times F}: y\in Y\}$, whenever $N\geq n$.

\begin{enumerate}
	\item 
	\label{Enum:StrIrred}
	Each $X_s$ is $D_{|s|}$-irreducible.
	\item
	\label{Enum:PatternStable} 
	For any $s\sqsubseteq t\in 2^{<\omega}$ with $|s| = n$, we have $S_{n\times D_n}(X_s) = S_{n\times D_n}(X_t)$.
	\item 
	\label{Enum:Min}
	Suppose $s\in 2^{<\omega}$ is such that $|s| > n$ and $s(n) = 1$. Then every pattern in $S_{(n+1)\times D_n}(X_s)$ appears in every pattern in $S_{(n+1)\times D_{n+1}}(X_s)$.
	\item 
	\label{Enum:Inj}
	Suppose $s\in 2^n$. Then $S_{(n+1)\times D_{n+1}}(X_{s^\frown 0})\neq S_{(n+1)\times D_{n+1}}(X_{s^\frown 1})$. This is because the conclusion of item 3 is true for $X_{s^\frown 1}$ and false for $X_{s^\frown 0} = X_s\times 2^G$.
\end{enumerate}

We can now consider taking limits along the branches. It follows from item~\ref{Enum:PatternStable} above that for any $\alpha\in 2^\omega$, the flow $X_\alpha\subseteq (2^\omega)^G$ is well defined. We can think of $X_\alpha$ as a point in the space $K((2^\omega)^G)$ of compact subsets of $(2^\omega)^G$. The subshifts form a closed subspace, and given subshifts $\{Z_n: n< \omega\}\subseteq K((2^\omega)^G)$ and $Z\in K((2^\omega)^G)$, we have $Z_n\to Z$ iff for each finite $F\subseteq G$ and $k< \omega$, we eventually have $S_{k\times F}(Z_n) = S_{k\times F}(Z)$.  With this topology, the map $\Theta\colon 2^\omega\to K((2^\omega)^G)$ given by $\Theta(\alpha) = X_\alpha$ is continuous. Item~\ref{Enum:Inj} shows that $\Theta$ is injective. Whenever $\alpha\in 2^\omega$ has $\alpha^{-1}(\{1\})$ infinite, then item~\ref{Enum:Min} implies that $X_\alpha$ is a minimal flow.
\vspace{2 mm}

\begin{prop}
	For any $\alpha\in 2^\omega$ with $\alpha^{-1}(\{0\})$ and $\alpha^{-1}(\{1\})$ infinite, the flow $X_\alpha$ is a minimal, model-universal flow.
\end{prop}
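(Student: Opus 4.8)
\section*{Proof proposal}

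Minimality is already in hand: since $\alpha^{-1}(\{1\})$ is infinite, the discussion preceding the statement (via item~\ref{Enum:Min}) shows that $X_\alpha$ is minimal. So the whole task is to establish model-universality, and this is exactly where the hypothesis that $\alpha^{-1}(\{0\})$ be infinite will be used. The plan is to simulate the tree construction \emph{inside} an arbitrary free $G$-system $(X,\mu)$, producing an injective, $G$-equivariant Borel map $\psi\colon X'\to X_\alpha$ with $\mu(X')=1$; then $(X_\alpha,\psi^*\mu)\cong (X,\mu)$. Write $Z=\alpha^{-1}(\{0\})$ and $P=\alpha^{-1}(\{1\})$, both infinite. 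Fix a Borel injection $b\colon X\to 2^\omega$ together with a ``redundant'' assignment $j\colon Z\to\omega$ hitting every value infinitely often, and for $n\in Z$ define the equivariant coordinate rule $\phi_n(x)=b(x)_{j(n)}$. For $n\in P$, I would produce, exactly as in the proof of Proposition~\ref{Prop:SpacedWeakUniv}, an equivariant Borel map $w_n\colon X\to Y_{F_n}$ whose support is a maximal $F_n$-spaced set realized dynamically in $X$.

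Next I would define maps $\psi_n\colon X\to(2^n)^G$ by induction mirroring the tree: at a $0$-step append the free coordinate $g\mapsto \phi_n(gx)$; at a $1$-step append a free coordinate and then apply the recipe $\phi\langle X_{\alpha|_n},D_n,E_n,F_n\rangle$ to the pair $(\psi_n(x)\times(\text{free coord}),\,w_n(x))\in (X_{\alpha|_n}\times 2^G)\times Y_{F_n}$. By construction the inputs at each stage are genuine points of the domain of $\phi$, so $\psi_n(x)\in X_{\alpha|_n}$ for a.e.\ $x$. The hard point is that the $\Phi$-applications at $1$-steps overwrite earlier coordinates near the chosen centers, so \emph{a priori} $\psi_n(x)$ need not converge coordinatewise. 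I would resolve this by strengthening the ``how rapid'' hypotheses on the defining sequences to require $\sum_{n\in P}|E_n^3|/|F_n|<\infty$; this is legitimate since $F_n$ may be taken arbitrarily large subject only to $F_n\supseteq E_n^5$. Because a maximal $F_n$-spaced set, realized in $X$, has measure at most $1/|F_n|$, the $E_n^3$-neighborhood of its center set has measure at most $|E_n^3|/|F_n|$. Borel--Cantelli then gives: for a.e.\ $x$ and every fixed $g\in G$, the point $g$ lies within $E_n^3$ of a center for only finitely many $n\in P$. Hence every coordinate of $\psi_n(x)$ stabilizes, $\psi(x):=\lim_n\psi_n(x)$ exists a.e., and is Borel and equivariant. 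It lands in $X_\alpha$ because for each $n$ and finite $F$ the pattern $\psi(x)|_{n\times F}$ agrees with $\psi_m(x)|_{n\times F}\in S_{n\times F}(X_{\alpha|_m})$ for large $m$, and $S_{n\times F}(X_{\alpha|_m})=S_{n\times F}(X_\alpha)$ eventually.

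For injectivity, the same Borel--Cantelli step yields, for a.e.\ $x$, an $N(x)$ such that $1_G$ is clean at every $1$-step $n>N(x)$. Consequently, for each $0$-position $n\in Z$ with $n\ge N(x)$ the value $\psi(x)(n,1_G)$ is never overwritten and equals $\phi_n(x)=b(x)_{j(n)}$. Since each bit of $b(x)$ has infinitely many copies among $\{n\in Z\}$, cofinitely many of them exceeding $N(x)$, I can read off every bit of $b(x)$ from $\psi(x)$ as the eventual value along its copies; as $b$ is injective, so is $\psi$, and therefore $(X_\alpha,\psi^*\mu)\cong(X,\mu)$.

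The main obstacle is precisely the cumulative effect of infinitely many $\Phi$-overwrites: it threatens both convergence of the construction to an honest point of $X_\alpha$ and the survival of the encoded data. Both are defeated by the sparsity/Borel--Cantelli estimate, which forces any fixed location to be eventually clean. The redundant encoding across the (infinitely many) $0$-coordinates is what upgrades the mere weak universality coming from the spaced-set factor to genuine injectivity, and it is exactly here that the assumption $\alpha^{-1}(\{0\})$ infinite enters.
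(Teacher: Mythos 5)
Your proposal is correct and follows essentially the same route as the paper: simulate the tree construction coordinate by coordinate, use the bound $\mu(A_n)\le 1/|F_n|$ for the center sets together with Borel--Cantelli (under a summability strengthening of the growth hypotheses, exactly parallel to the paper's assumption $|E_n^4|/|F_n|<1/2^n$) to stabilize every coordinate almost everywhere, and obtain injectivity from the redundant infinite-to-one encoding. The only difference is organizational: the paper factors through the auxiliary model-universal flow $Y_\alpha=(2^G)^\omega\times\prod_{n\in T}Y_{F_n}$ and shows the limit map into $X_\alpha$ is almost everywhere injective for every invariant measure, whereas you build the Borel embedding directly from each free system $(X,\mu)$.
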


\begin{proof}
Having already discussed minimality, we focus on model-universality. Write $T = \alpha^{-1}(\{1\})$, and form the flow $Y_\alpha:= (2^G)^\omega\times \prod_{n\in T} Y_{F_n}$. Then $Y_\alpha$ is model-universal. We have a continuous $G$-map $\psi_\alpha\colon Y_\alpha\to \prod_n X_{\alpha|_n}$ given inductively as follows. First let $f_\omega\colon \omega\to (\omega\setminus T)$ and $f_T\colon T\to T$ be infinite-to-one surjections. Let $y\in Y_\alpha$, and write $y = \{(y_n)_{n<\omega}, (s_n)_{n\in T}\}$ with $y_n\in 2^G$ and $s_n\in Y_{F_n}$. Then we write $\psi_\alpha(y) = (\psi_\alpha(y)_n)_{n<\omega}$ with each $\psi_\alpha(y)_n\in X_{\alpha|_n}$. We let $\psi_\alpha(y)_0$ be the unique member of the trivial flow $X_\emptyset$. If $\psi_\alpha(y)_n$ has been defined and $n\not\in T$, then $\psi_\alpha(y)_{n+1} = (\psi_\alpha(y)_n, y_{f_\omega(n)})$. If $n\in T$, then $\psi_\alpha(y)_{n+1} = \phi_n((\psi_\alpha(y)_n, s_{f_T(n)}), s_n)$, where $\phi_n = \phi\langle X|_{\alpha|_n}\times 2^G, D_n, E_n, F_n\rangle$. 

Notice that if the sequence $(\psi_\alpha(y)_n)_{n<\omega}$ converges to some $x\in (2^\omega)^G$, then $x\in X_\alpha$. Let $Y_\alpha'\subseteq Y_\alpha$ be the subset of those $y$ for which $\psi_\alpha(y)_n$ is convergent. Then the map $\eta\colon Y'_\alpha \to X_\alpha$ with  $\eta(y) = \lim_n \psi_\alpha(y)_n$ is Borel. It suffices to show that if the $D_n$ grow rapidly enough, then $Y_\alpha'$ has measure $1$ for any $G$-invariant measure on $Y_\alpha$. To that end, fix $y = ((y_n)_{n<\omega}, (s_n)_{n\in T})$, and consider some $g\in G$. A sufficient condition for the sequence $\psi_\alpha(y)_n(g)$ to be convergent is that for a tail of $n\in T$, we have $s_n(h) = 0$ whenever $h\in E_n^3g$. This condition ensures that for suitably large $n\in T$, we have $\psi_\alpha(y)_{n+1}(g) = (\psi_\alpha(y)_n(g), s_{f_T(n)}(g))$. Define $Y''_\alpha\subseteq Y'_\alpha$ to be those $y$ for which on a tail of $n\in T$, we have $s_n(g) = 0$ for any $g\in E_n^4$. Notice that $Y''_\alpha$ is also Borel and $G$-invariant.

Fix $\nu$ an invariant measure on $Y_{F_n}$. Then letting $U = \{s\in Y_{F_n}: s(1_G) = 1\}$, we have $\nu(U) \leq 1/|F_n|$. This is because $g\cdot U = \{s\in Y_{F_n}: s(g^{-1}) = 1\}$, so by definition of the subshift $Y_{F_n}$, we have that the collection $\{g\cdot U: g\in F_n\}$ is pairwise disjoint. Then by invariance and a union bound, we have $\nu(\{s\in Y_{F_n}: s(g) = 1 \text{ for some }g\in E_n^4\}) \leq |E_n^4|/|F_n|$. We now add our last assumption to the growth of the $D_n$.
\begin{itemize}
	\item 
	$|E_n^4|/|F_n| < 1/2^n$.
\end{itemize}
From this assumption, it follows from the Borel-Cantelli lemma that for any invariant measure $\mu$ on $Y_\alpha$ that $\mu(Y_\alpha'') = 1$. 

Furthermore, we claim that $\eta$ is injective on $Y_\alpha''$. To see this, suppose that $y\neq y'\in Y_\alpha''$, with $y = \{(y_n)_{n<\omega}, (s_n)_{n\in T}\}$ and $y' = \{(y'_n)_{n< \omega}, (s'_n)_{n\in T}\}$. First suppose that $y_n(g)\neq y_n'(g)$ for some $n < \omega$ and $g\in G$. Then for some large enough $N < \omega$ and any $k, \ell\geq N$, we have $\psi_\alpha(y)_k(g) = \psi_\alpha(y)_\ell(g)$, and same for $y'$. Now pick some suitably large $k\in \omega\setminus T$ with $f_\omega(k) = n$. Then $\psi_\alpha(y)_{k+1}(g) = \psi_\alpha(y)_k(g)\times y_n(g)$, and similarly for $y'$. It follows that $\eta(y)\neq \eta(y')$. In the case that $s_n(g)\neq s_n'(g)$ for some $n\in T$, the argument is almost the same. For a suitably large $k\in T$ with $f_T(k) = n$, we use the assumption that $y$ and $y'$ are in $Y_\alpha''$ to see that $\psi_\alpha(y)_{k+1}(g) = \psi_\alpha(y)_k(g)\times s_n(g)$, and similarly for $y'$. Once more, we have $\eta(y)\neq \eta(y')$.
\end{proof}
\vspace{2 mm}

To prove Theorem~\ref{IntroThm:MinModels}, we need to recall some results from~\cite{GTWZ} (in particular, see Corollary 6.8). There, it is shown that every minimal flow is disjoint from every strongly irreducible subshift. From this, it follows that every minimal flow is disjoint from any $X_\alpha$ where $\alpha$ has a tail of zeros. Since disjointness is a $G_\delta$ condition (\cite{GTWZ}, Proposition 6.4), it follows that every minimal flow is disjoint from $X_\alpha$ for comeagerly many $\alpha\in 2^\omega$. We are now in a position to apply Mycielski's theorem (see \cite{K}, 19.1) to find our mutually disjoint family $\{X_{\alpha_i}: i< \mathfrak{c}\}$ of minimal, model-universal shifts.
\vspace{2 mm}

\section{From essentially free to free}

Recall that if $Y$ is a minimal metrizable flow, then an extension $\pi\colon Z\to Y$ is called \emph{almost 1-1} if the set $\{z\in Z: |\pi^{-1}(\{\pi(z)\})| = 1\}$ is comeager. Notice that $Z$ must also be minimal. To see this, let $z\in Z$ and $V\subseteq Z$ be non-empty open. Then find $z'\in V$ with $|\pi^{-1}(\{\pi(z')\})| = 1$. We can find a net $g_i\in G$ with $g_i\cdot \pi(z)\to \pi(z')$. It follows that $g_i\cdot z\to z'$. In particular, the orbit of $z$ meets $V$. 

One method of producing almost 1-1 extensions of a given minimal $G$-flow is to consider $\mathrm{Reg}(Y)$, the Boolean algebra of regular open subsets of $Y$. Recall that $A\subseteq Y$ is \emph{regular open} if $\mathrm{Int}(\overline{A}) = A$. We remind the reader that in this Boolean algebra, we have $A^c = Y\setminus \overline{A}$, $A\vee B = \mathrm{Int}(\overline{A\cup B})$, and $A\wedge B = A\cap B$. If $\mathcal{B}\subseteq \mathrm{Reg}(Y)$ is a subalgebra, then $\mathrm{St}(\mathcal{B})$, the space of ultrafilters on $\mathcal{B}$, is a compact, zero-dimensional space whose basic clopen neighborhood has the form $\{p\in \mathrm{St}(\mathcal{B}): A\in p\}$, where $A\in \mathcal{B}$. If $\mathcal{B}$ is also $G$-invariant, then $\mathrm{St}(\mathcal{B})$ is a $G$-flow. If $\mathcal{B}$ is countable, then $\mathrm{St}(\mathcal{B})$ is homeomorphic to Cantor space. Now suppose that $\mathcal{B}$ contains a basis for the topology on $Y$. Then we have a $G$-map $\pi\colon \mathrm{St}(\mathcal{B})\to Y$ given by $\pi(p) = y$ iff every $A\in \mathcal{B}$ with $A\ni y$ satisfies $A\in p$. Furthermore, the map $\pi$ is \emph{pseudo-open}, meaning that images of open sets have non-empty interior. For $y\in Y$, we have $|\pi^{-1}(\{y\})| = 1$ iff for every $A\in \mathcal{B}$, we have $y\in A$ or $y\in Y\setminus \overline{A}$. So when $\mathcal{B}$ is countable, the set $\{y\in Y: |\pi^{-1}(y)| = 1\}$ is comeager. Since $\pi$ is pseudo-open, it follows that $\{z\in Z: |\pi^{-1}(\pi(z))| = 1\}$ is also comeager.

In general, an almost 1-1 extension can have very different measure-theoretic behavior than the base flow. Indeed, this fact is heavily exploited in~\cite{W}. For us however, we will seek to build almost 1-1 extensions which preserve the measure-theoretic properties of the base flow. For the remainder of the section, fix $Y$ a minimal, model-universal flow whose underlying space is a Cantor set. Recall that this implies that $Y$ is essentially free. We will call an invariant measure $\mu$ on $Y$ \emph{free} if for every $g\in G$, we have $\mu(Y_g) = 0$, where $Y_g = \{y\in Y: gy = y\}$.
\vspace{2 mm}

\begin{defin}
	\label{Def:StrReg}
	Given $A\subseteq Y$, we call $A$ \emph{strongly regular open} if $A$ is regular open and for every free invariant measure $\mu$, we have $\mu(A) + \mu(Y\setminus \overline{A}) = 1$. Denote by $\mathrm{SReg}(Y)$ the collection of strongly regular open sets.
\end{defin} 
\vspace{2 mm}

\begin{prop}
	\label{Prop:StrBoolean}
	$\mathrm{SReg}(Y)$ is a $G$-invariant subalgebra of $\mathrm{Reg}(Y)$.
\end{prop}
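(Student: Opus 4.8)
The plan is to reduce the strongly regular open condition to a statement about boundaries and then check that each Boolean operation and the $G$-action preserve it. First I would observe that for an \emph{open} set $A$, the three sets $A$, $\partial A := \overline{A}\setminus A$, and $Y\setminus\overline{A}$ partition $Y$; hence for any measure $\mu$ the equation $\mu(A)+\mu(Y\setminus\overline{A})=1$ is equivalent to $\mu(\partial A)=0$. Thus $\mathrm{SReg}(Y)$ is exactly the set of regular open $A$ whose boundary $\partial A$ is null for every free invariant measure. Since $\mathrm{Reg}(Y)$ is already known to be a Boolean algebra under the stated operations, I only need to verify that the \emph{boundary-null} clause is inherited; in particular $\emptyset$ and $Y$ lie in $\mathrm{SReg}(Y)$ since their boundaries are empty.

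Next, for closure under complement I would compute $\partial(A^c)$ for regular open $A$, where $A^c = Y\setminus\overline{A}$. Using regular openness, $\mathrm{Int}(\overline{A}) = A$, so $\overline{A^c} = \overline{Y\setminus\overline{A}} = Y\setminus\mathrm{Int}(\overline{A}) = Y\setminus A$, and therefore $\partial(A^c) = (Y\setminus A)\setminus(Y\setminus\overline{A}) = \overline{A}\setminus A = \partial A$. Hence $\partial(A^c)=\partial A$, and the boundary-null clause transfers verbatim. For closure under meet, recall $A\wedge B = A\cap B$ and use the elementary inclusion $\partial(A\cap B)\subseteq \partial A\cup\partial B$, valid for open $A,B$, which gives $\mu(\partial(A\cap B))\leq \mu(\partial A)+\mu(\partial B)=0$ for every free invariant $\mu$. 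Closure under join then follows for free via De Morgan, $A\vee B = (A^c\wedge B^c)^c$, since I have already handled $^c$ and $\wedge$.

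Finally, for $G$-invariance I would use that each $g\in G$ acts by a homeomorphism, so $gA$ is again regular open and $\partial(gA)=g\cdot\partial A$; invariance of $\mu$ then yields $\mu(\partial(gA))=\mu(\partial A)=0$ for every free invariant $\mu$, whence $gA\in\mathrm{SReg}(Y)$.

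I expect no serious obstacle here: all the content is topological bookkeeping about boundaries. The one point worth stating carefully is the complement identity $\partial(A^c)=\partial A$, which genuinely uses regular openness of $A$. It is also worth remarking that the freeness hypothesis on $\mu$ is never invoked in this argument—the free invariant measures simply form a fixed class over which the boundary-null condition is quantified, and every operation respects that class measure by measure; freeness will matter only later, when $\mathrm{SReg}(Y)$ is used to build the extension.
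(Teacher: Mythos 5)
Your proof is correct and follows essentially the same route as the paper: the paper's one displayed computation is exactly your inclusion $\partial(A\cap B)\subseteq\partial A\cup\partial B$, and it likewise treats complementation and $G$-invariance as immediate (you simply make explicit the reformulation of strong regularity as ``boundary null for every free invariant measure'' and the identity $\partial(A^c)=\partial A$, which the paper leaves unstated).
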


\begin{proof}
	Clearly $\mathrm{SReg}(Y)$ is $G$-invariant and closed under complements, so it is enough to check closure under intersection. Given $A, B\in \mathrm{SReg}(Y)$, we have 
	\begin{align*}
	\overline{(A\cap B)}\setminus (A\cap B) &= \overline{(A\cap B)}\setminus A\cup \overline{(A\cap B)}\setminus B\\[1 mm]
	&\subseteq (\overline{A}\setminus A)\cup (\overline{B}\setminus B).
	\end{align*}
	Since $A$ and $B$ are both strongly regular open, the last entry must have measure zero for any free invariant measure $\mu$.
\end{proof}
\vspace{2 mm}

Of course, we have yet to prove the existence of any interesting strongly regular open sets. We do this in the next lemma.
\vspace{2 mm}

\begin{lemma}
	\label{Lem:StrRegExist}
	For every $g\in G\setminus \{1_G\}$, there is a partition of $Y\setminus Y_g$ into three relatively clopen pieces $A_g$, $B_g$, and $C_g$ with the property that $gA_g\cap A_g = \emptyset$, and likewise for $B_g$ and $C_g$. In particular, $A_g$, $B_g$, and $C_g$ are all strongly regular open sets.
\end{lemma}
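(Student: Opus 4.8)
The plan is to fix $g \in G \setminus \{1_G\}$ and produce a relatively clopen three-coloring of the $g$-free part of $Y$. First I would recall the geometric meaning of the statement: since $Y_g = \{y : gy = y\}$ is closed (as $Y$ is Hausdorff), its complement $Y \setminus Y_g$ is open, and on this set the map $y \mapsto gy$ acts as a free homeomorphism in the sense that $gy \neq y$ throughout. The task is to properly color the ``graph'' whose edges join $y$ to $gy$, using three colors, in such a way that each color class is relatively clopen in $Y \setminus Y_g$. Three colors is the natural target because a proper coloring of a graph generated by a single involution-free transformation needs at most three colors, exactly as in the classical Borel/continuous colorings of a single aperiodic transformation.

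**The key construction**

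The central idea I would use is that $Y$ is zero-dimensional (a Cantor set), so $Y \setminus Y_g$ admits a basis of relatively clopen sets, and I can exploit compactness to get a finite such cover. Concretely, for each $y \in Y \setminus Y_g$, since $gy \neq y$ and $Y$ is Hausdorff and zero-dimensional, I can choose a clopen neighborhood $U_y \ni y$ with $gU_y \cap U_y = \emptyset$. The sets $\{U_y : y \in Y \setminus Y_g\}$ form an open cover of the (not necessarily compact) set $Y \setminus Y_g$; to handle non-compactness I would instead work on the closed sets $K_n := \{y : \text{dist-type condition separating } y \text{ from } Y_g\}$ exhausting $Y \setminus Y_g$, or simply observe that $Y \setminus Y_g = \bigcup_n V_n$ for an increasing sequence of clopen sets $V_n$ each with compact closure inside $Y \setminus Y_g$, since $Y_g$ is closed in the Cantor set $Y$. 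On each such clopen piece I extract a finite subcover and refine it to a finite clopen partition $\{P_1, \dots, P_m\}$ where each cell $P_i$ satisfies $gP_i \cap P_i = \emptyset$.

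**From a partition to a three-coloring**

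Given such a finite clopen partition with the ``small'' property $gP_i \cap P_i = \emptyset$, I would then run the standard greedy/inductive argument that converts a finite partition into a proper three-coloring of the transformation generated by $g$. The model to emulate is the classical result that any clopen partition of a zero-dimensional space into sets each disjoint from their $g$-image can be merged into three clopen classes $A_g, B_g, C_g$ with $gA_g \cap A_g = gB_g \cap B_g = gC_g \cap C_g = \emptyset$; one processes the cells $P_i$ in order, assigning to each point the least color in $\{A_g, B_g, C_g\}$ avoiding conflicts with its $g$-neighbor and $g^{-1}$-neighbor, and clopenness of the resulting classes follows because all the data (membership in cells, the maps $y \mapsto gy$ and $y \mapsto g^{-1}y$) are continuous. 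The final assertion that $A_g, B_g, C_g$ are strongly regular open then follows almost immediately: each is clopen hence regular open, and for a clopen set $A$ we have $Y \setminus \overline{A} = Y \setminus A$, so $\mu(A) + \mu(Y \setminus \overline{A}) = \mu(A) + \mu(Y \setminus A) = 1$ for \emph{every} invariant measure, free or not (the free hypothesis is not even needed here since the boundary is empty).

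**The main obstacle**

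The step I expect to demand the most care is controlling the coloring uniformly as $y$ approaches $Y_g$, that is, verifying that the three color classes, defined on the open set $Y \setminus Y_g$, are genuinely relatively clopen there rather than merely clopen on each compact piece with possible ``seams'' accumulating at $Y_g$. The noncompactness of $Y \setminus Y_g$ is the real subtlety: a naive finite-subcover argument does not directly apply. I would address this by using that $Y_g$ is clopen-approximable (being closed in a Cantor set, it is an intersection of clopen sets), choosing the exhausting clopen sets $V_n$ so that $gV_n \cap V_n$ is controlled, and arranging the coloring on $V_{n+1} \setminus V_n$ to be consistent with the coloring already defined on a neighborhood of $V_n$; local finiteness of the $g$-orbit structure near each point guarantees that only finitely many cells interact at any given point, so the coloring stabilizes locally and remains continuous. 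Provided this bookkeeping is carried out, the three resulting classes are relatively clopen and the strong regularity is immediate.
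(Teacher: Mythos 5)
Your construction of the three pieces follows essentially the same route as the paper: write $Y\setminus Y_g$ as a countable union of compact open sets, refine so that each piece is disjoint from its $g$-translate, then greedily three-color, extending stage by stage and subdividing each new cell so that its $g$- and $g^{-1}$-images are monochromatic or uncolored (this is what makes the greedy choice locally constant, hence the color classes open). That part is sound, and your worry about ``seams accumulating at $Y_g$'' is actually a non-issue: the three classes are open in $Y$ and partition $Y\setminus Y_g$, so each is automatically relatively clopen in $Y\setminus Y_g$.

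The genuine gap is in your justification of the ``in particular'' clause. You assert that $A_g$, $B_g$, $C_g$ are clopen in $Y$, so that $Y\setminus \overline{A_g} = Y\setminus A_g$ and hence $\mu(A_g) + \mu(Y\setminus\overline{A_g}) = 1$ for \emph{every} invariant measure, ``the free hypothesis not even being needed.'' This is false and misses the point of the lemma. Each piece is a countable union of clopen sets, hence open in $Y$, but its closure in $Y$ can contain points of $Y_g$; if all three pieces were clopen in $Y$, then $Y_g = Y\setminus(A_g\cup B_g\cup C_g)$ would be clopen, which fails whenever $Y_g$ is nonempty (it is closed and nowhere dense, since $Y$ is only \emph{essentially} free --- and $Y_g\neq\emptyset$ is exactly the situation Theorem~\ref{IntroThm:Free} is designed to repair). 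The correct argument is that $\overline{A_g}\setminus A_g \subseteq Y\setminus(A_g\cup B_g\cup C_g) = Y_g$, because $B_g$ and $C_g$ are open and disjoint from $A_g$; and $\mu(Y_g) = 0$ precisely because $\mu$ is a \emph{free} invariant measure. For a non-free invariant measure $Y_g$ can have positive measure and the identity genuinely fails, which is why Definition~\ref{Def:StrReg} quantifies only over free measures.
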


\begin{proof}
	Write $Y\setminus Y_g = \bigcup_n U_n$ with each $U_n$ compact open. We may assume that the $U_n$ are pairwise disjoint, and by further partitioning each $U_n$ into finitely many clopen pieces if needed, we may assume that $gU_n\cap U_n = \emptyset$ for each $n< \omega$. We will inductively partition $V_n := \bigcup_{k< n} U_n$ into pieces $A_n$, $B_n$, and $C_n$ with the property that $A_N\cap V_n = A_n$ for $N\geq n$, likewise for $B_N$ and $C_N$. We then set $A_g = \bigcup_n A_n$, and likewise for $B_g$ and $C_g$. 
	
	We set $A_0 = B_0 = C_0 = \emptyset$. Assume $A_k$, $B_k$, and $C_k$ have been defined for some $k < \omega$. We will form clopen sets $A_k'$, $B_k'$, and $C_k'$ so that $U_k = A_k'\cup B_k'\cup C_k'$.  Partition $U_n$ into finitely many clopen sets $\{W_j: j<m\}$ with the property that for each $j< m$ and for each $h\in \{g^{-1}, g\}$, we either have $hW_j\subseteq A_k$, $hW_j\subseteq B_k$, $hW_j\subseteq C_k$,  or $hW_j\cap (A_k\cup B_k\cup C_k) = \emptyset$. Add each $W_j$ to the set $A_k'$, $B_k'$, or $C_k'$ in such a way so that if $hW_j\subseteq A_k$ for some $h$ as above, then $W_j$ is not added to $A_k'$, and likewise for $B_k'$ and $C_k'$. We then set $A_{k+1} = A_k\cup A_k'$, and likewise for $B_{k+1}$ and $C_{k+1}$.
	
	Notice that for each $n< \omega$, we have $gA_n\cap A_n = \emptyset$, and likewise for $B_n$ and $C_n$. Hence $A_g$ will also satisfy $gA_g\cap A_g = \emptyset$ as desired, and likewise for $B_g$ and $C_g$.
\end{proof}
\vspace{2 mm}

The last lemma we will need shows that metrizable, almost 1-1 extensions of $Y$ using strongly regular open sets preserve the measure-theoretic properties of $Y$.
\vspace{2 mm}

\begin{lemma}
	\label{Lem:MeasureInjPart}
	Let $\mathcal{B}$ be a countable $G$-invariant subalgebra of $\mathrm{SReg}(Y)$ extending the clopen algebra of $Y$. Let $Z = \mathrm{St}(\mathcal{B})$, and let $\pi\colon Z\to Y$ be the associated almost 1-1 extension. Then for any free invariant measure $\mu$ on $Y$, we have $\mu(\{y: |\pi^{-1}(\{y\})| = 1\}) = 1$.
\end{lemma}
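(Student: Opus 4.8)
The plan is to identify explicitly the set of points of $Y$ with non-singleton fiber and show it is null for every free invariant measure. Recall from the discussion of $\mathrm{St}(\mathcal{B})$ preceding this lemma that for $y\in Y$ we have $|\pi^{-1}(\{y\})| = 1$ precisely when for every $A\in \mathcal{B}$ either $y\in A$ or $y\in Y\setminus \overline{A}$. Negating this characterization, the set of points with non-singleton fiber is exactly
$$B := \{y\in Y: |\pi^{-1}(\{y\})| > 1\} = \bigcup_{A\in \mathcal{B}} \left(\overline{A}\setminus A\right),$$
since $y$ fails the singleton condition iff there is some $A\in \mathcal{B}$ with $y\in \overline{A}$ but $y\notin A$. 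Thus it suffices to prove $\mu(B) = 0$ for every free invariant measure $\mu$.

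The core of the argument is then immediate from the definition of $\mathrm{SReg}(Y)$. Fix $A\in \mathcal{B}$, and fix a free invariant measure $\mu$. Since $\mathcal{B}\subseteq \mathrm{SReg}(Y)$, the set $A$ is strongly regular open, so $\mu(A) + \mu(Y\setminus \overline{A}) = 1$. As $A$ and $Y\setminus \overline{A}$ are disjoint, their union $A\cup (Y\setminus\overline{A})$ has full measure. Its complement in $Y$ is $(Y\setminus A)\cap \overline{A} = \overline{A}\setminus A$, so $\mu(\overline{A}\setminus A) = 0$. Because $\mathcal{B}$ is \emph{countable}, $B$ is a countable union of $\mu$-null sets, whence $\mu(B) = 0$ and therefore $\mu(\{y: |\pi^{-1}(\{y\})| = 1\}) = 1$, as desired.

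I do not expect a serious obstacle here: all the substantive content has been front-loaded into Definition~\ref{Def:StrReg} and the fiber characterization already recorded in the text, and what remains is genuinely bookkeeping. The one place that warrants a line of care is the set-theoretic identity $Y\setminus\bigl(A\cup(Y\setminus\overline{A})\bigr) = \overline{A}\setminus A$, which is exactly the reason the defining equation $\mu(A) + \mu(Y\setminus\overline{A}) = 1$ of a strongly regular open set is precisely the right hypothesis to kill the fiber-branching set; the countability of $\mathcal{B}$ then does the rest via a trivial union bound rather than any Borel--Cantelli-style estimate.
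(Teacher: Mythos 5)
Your proof is correct and is essentially the paper's argument: the paper intersects the countably many full-measure sets $A\cup(Y\setminus\overline{A})$, while you equivalently take the countable union of the null sets $\overline{A}\setminus A$; both rest on the same fiber characterization and the defining property of strongly regular open sets. No issues.
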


\begin{proof}
	By the discussion at the beginning of the section, we have
	\begin{align*}
	\{y\in Y: |\pi^{-1}(\{y\})| = 1\} = \bigcap_{A\in \mathcal{B}} A\cup (Y\setminus \overline{A}).
	\end{align*}
	Since $\mathcal{B}$ is a countable collection of strongly regular open sets, this set must have measure $1$ for any free $\mu$.
\end{proof}
\vspace{2 mm}

\begin{proof}[Proof of Theorem~\ref{IntroThm:Free}]
	Let $\mathcal{B}\subseteq \mathrm{SReg}(Y)$ be a countable, $G$-invariant subalgebra containing all of the sets $A_g$, $B_g$, $C_g$ from Lemma~\ref{Lem:StrRegExist}. Then $\mathrm{St}(\mathcal{B})$ will be the desired flow. To see that $\mathrm{St}(\mathcal{B})$ is free, let $p\in \mathrm{St}(\mathcal{B})$ and $g\in G\setminus \{1_G\}$. Then $p$ contains one of $A_g$, $B_g$, or $C_g$, WLOG say $A_g\in p$. Then since $gA_g\cap A_g = \emptyset$, we must have $gp\neq p$. To see that $\mathrm{St}(\mathcal{B})$ is model-universal, we note that on the set $Y_0 := \{y\in Y: |\pi^{-1}(\{y\})| = 1\}$, the map $\pi^{-1}\colon Y_0\to Z$ is well defined. By Lemma~\ref{Lem:MeasureInjPart}, this set has measure $1$ for all free invariant measures on $Y$.
\end{proof}

\end{document}